\newtheorem{theorem}{Theorem}[section]
\newtheorem{remark}{Remark}[section]
\numberwithin{equation}{section}
\DeclareMathOperator{\H2}{\mathcal{H}_{q^2, g/2+1}}
\DeclareMathOperator{\A2}{\mathcal{A}_{q^2}}
\DeclareMathOperator{\Hq}{\mathcal{H}_{q^2}}
\DeclareMathOperator{\Aq}{\mathcal{A}_{q}}
\DeclareMathOperator*{\Res}{Res}  
\newtheorem{lemma}{Lemma}[section]
\DeclareMathOperator{\moda}{mod}
\newtheorem{definition}{Definition}[section]
\title{Twisted second moment of primitive cubic $L$-functions}
\author{Ziwei Hong}
\address{School of Mathematical Sciences, Renmin University of China, Beijing, P.R. China}
\email{hongziwei@live.com}
\author{Zhiyong Zheng}
\address{School of Mathematics, 
Renmin University of China, Beijing, P.R. China}
\email{zhengzy@ruc.edu.cn}
\date{July 2025}
\begin{document}

\begin{abstract}
We investigate the mean value of the twisted second moment of primitive cubic $L$-functions over $\mathbb{F}_q(T)$ in the non-Kummer setting. Specifically, we study the sum
 \begin{equation*}
 \sum_{\substack{\chi\ primitive\ cubic\\ genus(\chi)=g}}\chi(h_1)\bar{\chi}(h_2)|L_q(\frac{1}{2}, \chi)|^2,
 \end{equation*}
where $L_q(s,\chi)$ denotes the $L$-function associated with primitive cubic character $\chi$. Employing a double Dirichlet series approach, we establish an error term of size $q^{(\frac{1}{2}+\varepsilon)g}$. 
\end{abstract}

\maketitle
\noindent {\bf Mathematics Subject Classification (2020)}: 11M06, 11M41, 11N37, 11L05, 11L40   

\noindent {\bf Keywords}:  central values, cubic $L$-functions, twisted moment, function field

\section{Introduction}

In this paper, we study the twisted second moment of primitive cubic Dirichlet $L$-functions $L_q(s,\chi)$ evaluated at the central point $s=\frac{1}{2}$, where $\chi$ ranges over primitive cubic characters of $\mathbb{F}_q[T]$ of fixed genus $g$, under the assumption that $q \equiv 2 \pmod{3}$.

 \subsection{Background}
The study of moments of $L$-functions is a central topic in analytic number theory, offering insight into the value distribution, non-vanishing, and zero statistics of these functions across various families. 

In the number field setting, Luo \cite{Luo_2004} investigated the first moment of cubic Dirichlet twists over $\mathbb{Q}(\xi_3)$, while Baier and Young \cite{BAIER2010879} considered the first moment of central values of Dirichlet $L$-functions associated with primitive cubic characters over $\mathbb{Q}$. G\"uloglu and Yesilyurt \cite{gulouglu2024mollified} studied the mollified first moment over the Eisenstein field, and also provided upper bounds for the mollified second moment. More recently, David et al. \cite{david2024non} analyzed the mollified second moment in the same setting and obtained a power-saving error term, which they used to study the non-vanishing of cubic $L$-functions.

In the function field context, significant progress has been made. David, Florea and Lalín \cite{2019The} computed the first moment of cubic L-functions over function fields with characters of fixed genus, assuming $q \equiv 2 \pmod{3}$. Later, in \cite{2021Nonvanishing}, they established upper bounds for a mollified second moment to study non-vanishing at the central point. Specifically, they proved that
\begin{align}\label{e}
    \sum_{\chi\in\mathcal{C}(g)}|L(\frac{1}{2},\chi)|^k|M(\chi;\frac{1}{\kappa})|^{k\kappa}\ll q^g.
\end{align}
where $\mathcal{C}(g)$ denotes the set of primitive cubic characters of genus $g$ and $M(\chi;\frac{1}{\kappa})$ is a Dirichlet polynomial. In their most recent work \cite{david2025nonvanishing}, they applied the theory of low-lying zeros to further investigate non-vanishing phenomena. In a complementary direction, Goel and Ray \cite{goel2025second} computed the (untwisted) second moment of cubic $L$-functions and showed that
\begin{align}
    \sum_{\substack{\chi~primitive~cubic\\ genus(\chi)=g}}L_{q}(\frac{1}{2},\chi)^2=\frac{g(g+2)A_q(\frac{1}{q^2},\frac{1}{q^{3/2}})\zeta_q(3/2)^2}{8\zeta_q(3)}q^{g+2}+O(q^g),
\end{align}
where $A_q$ is an explicitly defined function. Their method builds upon the framework developed in \cite{2019The}.

Despite these advances, the asymptotic of twisted moments in the cubic case has not yet been studied.

\subsection{Our contribution} In this paper, we address this gap and establish the first result in this direction. Specifically, we consider the twisted second moment in the non-Kummer setting. Our main theorem is as follows:
\begin{theorem}
Let $q$ be an odd prime power such that $q\equiv 2\moda 3$ and $h_1, h_2\in\mathbb{F}_q[T]$ be fixed polynomials. Then, for arbitrary $\varepsilon>0$, 
\begin{align}\label{main}
   \sum_{\substack{\chi\ primitive\ cubic\\ genus(\chi)=g}}\chi(h_1)\bar{\chi}(h_2)|L_q(\frac{1}{2}, \chi)|^2=(1-q^{2})SC(h_1,h_2)P(q^{-2})q^g+O(q^{(\frac{1}{2}+\varepsilon)g}),    
\end{align}
 where $P(u)$, $S$ and $C(h_1,h_2)$ are given in Section~\ref{residue}.
\end{theorem}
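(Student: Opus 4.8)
The plan is to set up a double Dirichlet series in two complex variables and to read off the genus-$g$ sum from its analytic structure, building on the function-field cubic-moment technology of \cite{2019The,goel2025second}. Since $|L_q(\tfrac12,\chi)|^2 = L_q(\tfrac12,\chi)\,L_q(\tfrac12,\bar\chi)$, I would first form, for $\Re(s)$ and $\Re(w)$ in a region of absolute convergence, the series
\[
Z(s,w;h_1,h_2)\;=\;\sum_{\chi\ \mathrm{primitive\ cubic}}\frac{\chi(h_1)\,\bar\chi(h_2)}{q^{w\,\mathrm{genus}(\chi)}}\,L_q(s,\chi)\,L_q(s,\bar\chi),
\]
so that the left-hand side of \eqref{main}, evaluated at $s=\tfrac12$, is the coefficient of $q^{-wg}$ in $Z$ and can be recovered, by Cauchy's formula, as a contour integral of $Z(\tfrac12,w;h_1,h_2)\,q^{wg}$ in $w$ around a small circle. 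Expanding each $L$-function as a Dirichlet series, $L_q(s,\chi)=\sum_{f\ \mathrm{monic}}\chi(f)\,|f|^{-s}$, and interchanging the order of summation reduces $Z$ to a sum over pairs $(f_1,f_2)$ of monic polynomials, weighted by $|f_1f_2|^{-s}$, of the twisted character average $\sum_{\chi}\chi(f_1h_1)\,\bar\chi(f_2h_2)\,q^{-w\,\mathrm{genus}(\chi)}$.

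Next I would bring in the non-Kummer parametrization, available precisely because $q\equiv2\pmod 3$: primitive cubic characters of $\mathbb{F}_q[T]$ are indexed (following \cite{2019The}) by square-free polynomials $F\in\mathbb{F}_{q^2}[T]$ that descend to $\mathbb{F}_q$, with $\mathrm{genus}(\chi)$ determined by $\deg F$, and $\chi$ equal, up to an explicit twist, to the cubic residue symbol $\chi_F=\bigl(\tfrac{\cdot}{F}\bigr)$ taken in $\mathbb{F}_{q^2}[T]$. After a M\"obius-type sieve removing the imprimitive characters, the twisted character average becomes a sum over such $F$ of $\chi_F(f_1h_1)\,\overline{\chi_F(f_2h_2)}$ weighted by a power of $|F|$; interchanging once more, the inner $F$-sum is a generalized cubic Gauss-sum Dirichlet series over $\mathbb{F}_{q^2}[T]$ attached to the argument built from $f_1h_1$ and $f_2h_2$. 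The function-field metaplectic machinery, as developed and used in \cite{2019The,2021Nonvanishing}, shows that this Gauss-sum series is a \emph{rational} function of $q^{-w}$ satisfying a functional equation and having poles only at a short explicit list of points; substituting this back, $Z(s,w;h_1,h_2)$ continues to a meromorphic --- in fact rational --- function of $(q^{-s},q^{-w})$ in a wide region.

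With $Z(\tfrac12,w;h_1,h_2)$ in hand as a rational function of $q^{-w}$, I would extract the genus-$g$ sum via the contour integral above and deform the contour outward. The principal pole of $w\mapsto Z(\tfrac12,w;h_1,h_2)$ contributes the main term $(1-q^2)SC(h_1,h_2)P(q^{-2})q^g$ of \eqref{main} --- identifying $P$, $S$ and $C(h_1,h_2)$ by this residue computation is exactly the content of Section~\ref{residue} --- while the remaining integral, estimated using the absence of further poles of $Z$ in the enlarged region together with standard bounds for the cubic Gauss-sum series and the Euler products away from their poles, is $O(q^{(\frac12+\varepsilon)g})$.

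The main obstacle, as is typical for such problems, lies in the analytic continuation and the precise pole structure of the double Dirichlet series. Three points need care. First, the cubic Gauss-sum series over $\mathbb{F}_{q^2}[T]$ must be continued far enough to the left, and the \emph{dual} terms that its functional equation injects into $Z$ --- the analogue of the secondary main term in the first-moment result of \cite{2019The} --- must be shown to be absorbed into the error $O(q^{(\frac12+\varepsilon)g})$ rather than producing a genuine secondary main term. Second, the cubic-reciprocity bookkeeping in the non-Kummer setting --- passing between $\mathbb{F}_q[T]$ and $\mathbb{F}_{q^2}[T]$, tracking the sixth-root-of-unity and leading-coefficient factors, and sieving off imprimitive and non-square-free contributions --- must be carried out uniformly in $h_1$ and $h_2$, and it is here that the dependence of $C(h_1,h_2)$ on the twist is produced. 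Third, one must verify that no pole of $Z(\tfrac12,\cdot\,;h_1,h_2)$ lying strictly between the principal pole and the final contour has been overlooked, since such a pole would contribute a term of size between $q^{g/2}$ and $q^{g}$ and would destroy the stated error term.
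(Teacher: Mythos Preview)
Your outline is coherent and follows the Gauss-sum Dirichlet-series route of \cite{2019The,goel2025second}, but this is \emph{not} the approach the paper takes, and the discrepancy matters for the error term. The paper also builds the double Dirichlet series $A_2(s,w)=\sum_F \chi_F(h_1)\overline{\chi_F(h_2)}|L_q(w,\chi_F)|^2/|F|_2^s$ and extracts the genus-$g$ sum by a contour integral, but it deliberately \emph{avoids} the metaplectic/cubic-Gauss-sum machinery you invoke. Instead, after the M\"obius sieve, the paper rewrites $A_2$ in two different ways --- once by opening $|L_q|^2$ as a sum over $N_1,N_2\in\mathcal{A}_q$ and summing the inner $F$-sum into $L_{q^2}(s,\chi^{(hN)})/L_{q^2}(2s,\overline{\chi^{(hN)}})$, and once by applying the functional equation to $L_q(w,\chi_F)$ --- and uses only the Lindel\"of bound (available unconditionally over function fields) to exhibit several overlapping tube regions $S_1,S_{2,1},S_{2,2},S_3$ of absolute convergence. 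Bochner's tube theorem then yields holomorphic continuation of $(u-q^{-2})A_2(u,v)$ to the convex hull $S_4=\{|u|<q^{-1},\,|uv^4|<q^{-3}\}$, and Proposition~C.5 of \cite{Cech1} propagates the bounds. The single pole at $u=q^{-2}$ gives the main term, and shifting to $|u|=q^{-1-\varepsilon}$ with $v=q^{-1/2}$ gives $O(q^{(1/2+\varepsilon)g})$ directly.

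This comparison is substantive: your route requires analytically continuing the cubic Gauss-sum series, controlling the dual terms its functional equation produces, and ruling out intermediate poles --- exactly the three obstacles you flag but do not resolve. In the existing literature that method has yielded error $O(q^g)$ for the untwisted second moment (Goel--Ray), so your assertion that the dual terms are absorbed into $O(q^{(1/2+\varepsilon)g})$ is the heart of the matter and is not justified by precedent. The paper's convexity argument sidesteps all of this: no Gauss-sum series, no dual terms, no pole-hunting beyond the obvious $u=q^{-2}$. What your approach would buy, if the obstacles were overcome, is possibly finer information (a secondary main term, explicit lower-order constants); what the paper's approach buys is a short, self-contained proof with a square-root error term.
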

Here $P(u)$ is a function defined by \eqref{P}, $S$ is a constant from \eqref{S} and $C(h_1,h_2)$ is a constant depending on the twisting polynomials $h_1$ and $h_2$. $L_q(s,\chi)$ is the $L$-function associeted to $\chi$.

Our result is consistent with the bound in \eqref{e}, and provides new quantitative information about the second moment with nontrivial twists.

\subsection{Methodology}

A key innovation in our approach is the use of double Dirichlet series. Using Perron’s formula, we express the twisted second moment as a contour integral involving such a series. By representing the double Dirichlet series in different ways, we identify multiple regions of convergence. Basic tools from complex analysis imply that the convex hull of these regions also lies within the domain of absolute convergence, allowing us to shift contours further and isolate the main term.

This method—originating from studies of moments of L-functions of higher-order characters (see \cite{GAO2024125, hong2025first,hong2025mean})—proves to be powerful and flexible. In particular, it avoids the intricate analysis of the distribution of cubic Gauss sums, which significantly simplifies the computation and reduces technical complexity.

Compared to the approaches used in \cite{2019The} and \cite{goel2025second}, our method is more direct and computationally efficient.

\section{Preliminary}

In this section, we recall some necessary well-known results on primitive cubic characters and $L$-functions. In fact, all these results can be found in \cite{2019The}.

We work in the non-Kummer setting, where q is an odd prime power such that $q \equiv 2 \moda{3}$. For simplicity, we do not consider the corresponding results in the Kummer setting ($q \equiv 1 \moda{3}$), although a similar asymptotic formula for the first moment of cubic L-functions in that case can be derived through analogous methods.


\subsection{Primitive cubic characters and $L$-functions}
In the non-Kummer setting, defining primitive cubic characters over $\mathbb{F}_q[T]$ is more intricate. Following the approach of \cite{2017On}, it is natural to restrict a cubic character defined over $\mathbb{F}_{q^2}[T]$ to obtain a cubic character over $\mathbb{F}_q[T]$.

We fix once and for all an isomorphism $\Omega$ between the cubic roots of $1$ in $\mathbb{C}^*$ and the cubic roots of $1$ in $\mathbb{F}^*_{q^2}$. We define the cubic residue symbol $\chi_{\pi}$, for $\pi$ an irreducible monic polynomial in $\mathbb{F}_{q^2}[T]$ such that $\pi\pi^{\sigma}=P$ is an irreducible polynomial in $\mathbb{F}_q[T]$, where $\sigma$ is the generator of $Gal(\mathbb{F}_{q^2}/\mathbb{F}_q)$. Let $a\in\mathbb{F}_{q^2}[T]$. If $\pi |a$, then $\chi_{\pi}(a)=0$, and otherwise $\chi_{\pi}(a)=\alpha$, where $\alpha$ is the unique root of unity in $\mathbb{C}$ such that 
\begin{equation*}
    a^{\frac{q^{2\deg\pi}-1}{3}}\equiv\Omega(\alpha)\moda \pi.
\end{equation*}
Then $\chi_{\pi}|_{\mathbb{F}_q[T]}$ is a cubic character module $P$.

We extend the definition by multiplicity. For any monic polynomial $F\in\mathbb{F}_{q^2}[T]$, $F=\pi_1^{e_1}\dots\pi_s^{e_s}$ with distinct $\pi_i$, we define $\chi_F=\chi_{\pi_1}^{e_1}\dots\chi_{\pi_s}^{e_s}$ . We restrict $\chi_{F}$ to $\mathbb{F}_q[T]$, then $\chi_{F}$ is a cubic character on $\mathbb{F}_q[T]$. $\chi_F$ is primitive if and only if $e_i=1$ for i=1,2,...,s and $F$ has no divisor in $\mathbb{F}_q[T]$. It it is so, then $\chi_F$ has conductor $FF^{\sigma}$.

 Let $\mathcal{A}_q$ denote the set of monic polynomials over $\mathbb{F}_q$. Let $\H2$ represent the set of monic, square-free polynomials in $\A2$ of degree $g/2+1$. With above notations, in \cite{2019The}, David et al show that
 \begin{lemma}
      Suppose $q\equiv 2\moda 3$, Then,
     \begin{equation}\label{equation}
 \sum_{\substack{\chi\ primitive\ cubic\\ genus(\chi)=g}}\chi(h_1)\bar{\chi}(h_2)|L_q(\frac{1}{2}, \chi)|^2=  \sum_{\substack{F\in\H2 \\ P|F\Rightarrow P\not\in \mathbb{F}_q[t]}}\chi_F(h_1)\bar{\chi}_F(h_2)|L_q(\frac{1}{2}, \chi_{F})|^2.
 \end{equation}
 \end{lemma}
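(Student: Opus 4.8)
\emph{Proof idea.} Equation~\eqref{equation} is purely combinatorial: both sides run over the same family of objects, indexed in two ways, so the plan is to recall the parametrization of primitive cubic characters in the non-Kummer case from \cite{2017On,2019The} and then check that the corresponding summands coincide. Since $q\equiv 2\moda 3$, an irreducible $P\in\mathbb{F}_q[T]$ admits a cubic character modulo it only when $\deg P$ is even, in which case $P$ splits in $\mathbb{F}_{q^2}[T]$; moreover the conductor of a primitive cubic character over $\mathbb{F}_q[T]$ is square-free (a cubic character cannot factor through a prime-power modulus with a nontrivial $p$-part). Consequently every primitive cubic character $\chi$ has conductor $\mathfrak{f}=FF^{\sigma}$ for a monic square-free $F\in\mathbb{F}_{q^2}[T]$ with $\gcd(F,F^{\sigma})=1$ (equivalently, no irreducible divisor of $F$ lies in $\mathbb{F}_q[T]$), and $\chi=\chi_{F}|_{\mathbb{F}_q[T]}$ for a suitable such $F$.

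The next point to nail down is the degree. The conductor--discriminant formula applied to the cubic extension attached to $\chi$ expresses the genus $g$ of $\chi$ in terms of $\deg\mathfrak{f}=2\deg F$; with the normalisation used in \cite{2019The} this reads $\deg\mathfrak{f}=g+2$, hence $\deg F=g/2+1$ (in particular primitive cubic characters of genus $g$ exist only for even $g$, and for odd $g$ both sides of \eqref{equation} are empty). Thus the admissible $F$ are exactly the elements of $\H2$ subject to the side condition $P\mid F\Rightarrow P\notin\mathbb{F}_q[T]$ written on the right of \eqref{equation}.

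It then remains to argue that $F\mapsto\chi_{F}|_{\mathbb{F}_q[T]}$ is a bijection between $\{F\in\H2:P\mid F\Rightarrow P\notin\mathbb{F}_q[T]\}$ and the set of primitive cubic characters of genus $g$. Surjectivity is the parametrization just recalled. For injectivity, I would note that $\chi_{F}|_{\mathbb{F}_q[T]}$ determines its conductor $FF^{\sigma}$, hence the set of conjugate pairs $\{\pi,\pi^{\sigma}\}$ dividing $\mathfrak{f}$; a short computation, using that $\sigma$ acts on the cube roots of unity in $\mathbb{F}_{q^2}^{\ast}$ by $\zeta\mapsto\zeta^{q}=\zeta^{-1}$, gives $\chi_{F^{\sigma}}|_{\mathbb{F}_q[T]}=\overline{\chi_{F}}|_{\mathbb{F}_q[T]}$, so the character values pin down, within each pair, the factor dividing $F$; since $\gcd(F,F^{\sigma})=1$ forces $F\neq F^{\sigma}$, one checks (as in \cite{2019The}) that distinct admissible $F$ yield distinct characters.

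Granting the bijection, the identity is immediate: writing $\chi=\chi_{F}|_{\mathbb{F}_q[T]}$ one has, tautologically, $\chi(h_1)=\chi_{F}(h_1)$, $\bar\chi(h_2)=\bar\chi_{F}(h_2)$, and $L_q(\tfrac12,\chi)=L_q(\tfrac12,\chi_{F})$, so \eqref{equation} holds summand by summand. There is no analytic difficulty here; the only genuinely substantive input — the multiplicity-free parametrization of primitive cubic characters together with the bookkeeping $\deg\mathfrak{f}=g+2$ — is taken directly from \cite{2019The}, and the main thing worth double-checking when transcribing it is that the coprimality side-condition there is precisely the one recorded in \eqref{equation}.
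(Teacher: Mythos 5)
The paper does not prove this lemma; it states it with a citation to David--Florea--Lal\'in \cite{2019The} and relies entirely on their parametrization of primitive cubic characters in the non-Kummer setting. Your reconstruction --- square-free conductor supported on even-degree primes, the splitting $\mathfrak{f}=FF^{\sigma}$ with $(F,F^{\sigma})=1$, the degree bookkeeping $\deg\mathfrak{f}=g+2$ giving $\deg F=g/2+1$, the bijectivity of $F\mapsto\chi_F|_{\mathbb{F}_q[T]}$ with $\chi_{F^{\sigma}}|_{\mathbb{F}_q[T]}=\overline{\chi_F}|_{\mathbb{F}_q[T]}$, and then matching the two sums term by term --- is correct and is the argument the cited reference supplies, so it fills the gap the paper leaves implicit.
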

 As 
 \begin{equation*}
     \chi_F(\alpha)=\Omega^{-1}\left(\alpha^{\frac{q^{2\deg F}-1}{3}}\right)
 \end{equation*}
 for $\alpha\in\mathbb{F}_q\subset\mathbb{F}_{q^2}$, and q is odd and $q\equiv 2\moda 3$, we remark that all cubic characters over $\mathbb{F}_q[T]$ are even. Hence, we have the following functional equation for even $L$-functions:

 \begin{lemma}[Functional equation] \label{fe}
     Let $F\in\mathcal{H}_{q^2}$ and suppose $F$ has no divisor in $\Aq$. Then
     $$L_q(s,\chi_{F})=\epsilon(\chi_F)q^{2s-1}\frac{1-q^{-s}}{1-q^{s-1}}\frac{L_q(1-s,\overline{\chi_F})}{|F|_2^{s-\frac{1}{2}}},$$
     where $\epsilon(\chi_F)=q^{-\deg F}G(\chi_F)$, $G(\chi_F)$ is the Gauss sum.
 \end{lemma}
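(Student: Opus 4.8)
The statement is the functional equation for the $L$-function of a primitive \emph{even} cubic character, so the plan is to read it off from the general functional equation for primitive Dirichlet $L$-functions over $\mathbb{F}_q[T]$ and then pin down the constants. Set $u=q^{-s}$ and write $\chi:=\chi_F|_{\mathbb{F}_q[T]}$. By the primitivity discussion recalled just before the statement (which uses exactly the hypotheses: $F$ square-free with no divisor in $\mathbb{F}_q[T]$), $\chi$ is a \emph{primitive} Dirichlet character modulo $f:=FF^{\sigma}$ with $\deg f=2\deg F=:d$, and $\chi$ is \emph{even} because $q$ is odd and $q\equiv 2\pmod{3}$ (noted in the excerpt). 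Since $\deg F=g/2+1$ we have $d=g+2$ and $g$ is even. From $L_q(s,\chi_F)=\sum_{n\ \mathrm{monic}}\chi(n)u^{\deg n}$ and the vanishing $\sum_{\deg n=k,\ \mathrm{monic}}\chi(n)=q^{k-d}\sum_{r\bmod f}\chi(r)=0$ for $k\ge d$, the $L$-function is a polynomial in $u$ of degree $\le d-1$; evenness forces a zero at $u=1$, so $(1-u)\mid L_q(s,\chi_F)$ and the primitive part $L_q(s,\chi_F)/(1-u)$ has degree $\le d-2=g$.

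The core is the exact functional equation for primitive $\chi$, which I would derive from finite Fourier inversion on $\mathbb{F}_q[T]/(f)$, i.e.\ the Gauss-sum reciprocity $\sum_{a\bmod f}\chi(a)\psi(ab/f)=\overline{\chi(b)}\,G(\chi)$ for primitive $\chi$, with $\psi$ the standard additive character (residue at infinity). Writing each coefficient $S_k=\sum_{\deg n=k,\ \mathrm{monic}}\chi(n)$ with $d/2\le k\le d-1$ as a character sum over residue classes mod $f$ and applying this identity expresses $S_k$ via $G(\chi)$ (times a power of $q$ and a known phase) against a coefficient of $L_q(1-s,\overline{\chi_F})$; repackaging over $k$ gives, after bookkeeping of $q$-powers and of the trivial factor,
\[
L_q(s,\chi_F)=\epsilon(\chi_F)\,q^{2s-1}\,\frac{1-q^{-s}}{1-q^{s-1}}\,\frac{L_q(1-s,\overline{\chi_F})}{|F|_2^{\,s-\frac12}},\qquad \epsilon(\chi_F)=q^{-\deg F}G(\chi_F).
\]
Equivalently, this functional equation is classical (it is exactly the form used in \cite{2019The}, which the excerpt cites for all such facts), so this step may instead be carried out by citation, the remaining task being only to check that the conventions — in particular the normalization of the additive character and of $G(\chi_F)$ — agree.

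Finally one matches the three ingredients. The degree-$g$ primitive part contributes the root-number weight $(\sqrt q\,u)^{g}=(qu^2)^{g/2}=q^{g/2-gs}=q^{2s-1}\,q^{-(g+2)(s-1/2)}=q^{2s-1}|F|_2^{-(s-1/2)}$ (using $|F|_2=q^{2\deg F}=q^{g+2}$), which is precisely the elementary factor above; evenness contributes the trivial factor $(1-u)\big/\big(1-(qu)^{-1}\big)=(1-q^{-s})/(1-q^{s-1})$, matching the trivial zero at $u=1$ on the left with the simple pole of the denominator on the right; and $\epsilon(\chi_F)=q^{-\deg F}G(\chi_F)$ is forced to have modulus $1$ by the standard evaluation $|G(\chi_F)|=q^{\deg F}=|f|^{1/2}$ for a primitive character, consistently with the Riemann Hypothesis for $L_q(s,\chi_F)$ and with the functional equation of the numerator $P_C(u)=\big(L_q(s,\chi_F)/(1-u)\big)\big(L_q(s,\overline{\chi_F})/(1-u)\big)$ of the zeta function of the cyclic cubic cover of $\mathbb{P}^1$ ramified along $FF^{\sigma}$, which has genus $g$ by Riemann--Hurwitz. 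The main obstacle is nothing conceptual but the simultaneous, error-prone tracking of (i) the exact power of $q$, (ii) the modulus and normalization of the Gauss sum $G(\chi_F)$ — subtle here because $\chi_F$ is built by restricting a character from $\mathbb{F}_{q^2}[T]$, so one must reconcile ``the Gauss sum'' of the lemma with the Gauss sum of the restricted character mod $FF^{\sigma}$ — and (iii) the $(1-q^{-s})/(1-q^{s-1})$ factor produced by evenness; given the excerpt's attribution of these facts to \cite{2019The}, a fully rigorous write-up can legitimately proceed by citation together with a convention check.
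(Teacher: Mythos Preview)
The paper does not prove this lemma at all: it is listed among the ``necessary well-known results'' in Section~2, with the blanket attribution ``In fact, all these results can be found in \cite{2019The}.'' So the paper's ``proof'' is pure citation.

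Your proposal goes well beyond this: you sketch the standard derivation from Gauss-sum reciprocity for primitive Dirichlet characters over $\mathbb{F}_q[T]$, verify the bookkeeping of the $q$-power $q^{2s-1}|F|_2^{-(s-1/2)}$, and explain the origin of the trivial factor $(1-q^{-s})/(1-q^{s-1})$ from evenness. This is correct and more informative than what the paper offers; you also explicitly note that citation to \cite{2019The} is a legitimate alternative, which is exactly what the paper does. One small adjustment: the lemma is stated for arbitrary $F\in\mathcal{H}_{q^2}$, not just those of degree $g/2+1$, so rather than asserting $\deg F=g/2+1$ you should simply set $g:=2\deg F-2$ for the degree-counting step (your identity $q^{g/2-gs}=q^{2s-1}|F|_2^{-(s-1/2)}$ then holds verbatim).
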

 Here $|F|_2=q^{2\deg F}$ stands for the modulus of polynomials in $\mathbb{F}_{q^2}[T]$. And the Gauss sum is defined as
  \begin{definition}
     For $\chi$ a primitive character of the modulus h on $\mathbb{F}_q[T]$, let
     $$G_q(\chi)=\sum_{a \moda h}\chi(a)e_q(\frac{a}{h}).$$
 \end{definition}

Next, we present the upper bound of $L$-functions. The folllowing lemma restates Lemma 2.6 and Lemma 2.7 from \cite{2019The}.
 \begin{lemma}[Lindel\"of Hypothesis]
 \label{LLH}
     Let $\chi$ be a primitive cubic character of conductor $h$ defined over $\mathbb{F}_q[T]$. Then, for $\Re(s)\ge\frac{1}{2}$ and all $\varepsilon>0$,
     \begin{align}
         |L_q(s,\chi)|\ll q^{\varepsilon\deg h};
     \end{align}
     for $\Re(s)\ge 1$ and for all $\varepsilon>0$
     \begin{align}
         |L_q(s,\chi)|\gg q^{-\varepsilon\deg h}.
     \end{align}
 \end{lemma}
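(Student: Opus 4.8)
The statement to be proved is the pointwise bound $|L_q(s,\chi)| \ll q^{\varepsilon\deg h}$ for $\Re(s)\ge\frac12$, together with the lower bound $|L_q(s,\chi)| \gg q^{-\varepsilon\deg h}$ for $\Re(s)\ge 1$. The plan is to exploit the fact that in the function field setting $L_q(s,\chi)$ is a polynomial in $u=q^{-s}$ of degree exactly $\deg h - 1$ (for $\chi$ primitive nontrivial), whose inverse roots all have absolute value $q^{1/2}$ by the Riemann Hypothesis for curves (Weil). Writing $L_q(s,\chi)=\prod_{j}(1-\gamma_j u)$ with $|\gamma_j| = q^{1/2}$ and $N := \deg h - 1$ factors, the plan is as follows.

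First I would establish the upper bound. On the critical line and to its right, $|u| = q^{-\Re(s)} \le q^{-1/2}$, so each factor satisfies $|1-\gamma_j u| \le 1 + |\gamma_j||u| = 1 + q^{1/2}q^{-\Re(s)} \le 2$. Multiplying over the $N = \deg h - 1$ factors gives $|L_q(s,\chi)| \le 2^{\deg h - 1} = q^{(\log 2/\log q)\deg h}$. For a \emph{fixed} $q$ this is already of the form $q^{c\deg h}$, but to get the clean $\varepsilon$-statement uniformly one sharpens the factor bound: since $|\gamma_j u| \le 1$ on $\Re(s)\ge\frac12$, one writes $\log|L_q(s,\chi)| = \sum_j \log|1-\gamma_j u|$ and notes this is a sum of at most $\deg h$ terms each bounded by $O(1)$; absorbing the constant, $|L_q(s,\chi)| \ll_\varepsilon q^{\varepsilon\deg h}$ once $\deg h$ is large (and trivially otherwise). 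Alternatively, and more in the spirit of the reference, one combines the Euler product $L_q(s,\chi) = \sum_{f \text{ monic}} \chi(f) |f|^{-s}$ (a polynomial, hence a finite sum over $\deg f \le \deg h - 1$) with the functional equation of Lemma~\ref{fe} and the RH bound on the coefficients, applying a Phragmén--Lindelöf / convexity argument between $\Re(s)=\frac12$ and $\Re(s)=1$: one has the trivial bound $|L_q(s,\chi)| \ll q^{O(\deg h)}$ for $\Re(s) = \frac12$, an $O(1)$-type bound for $\Re(s) \ge 1 + \delta$ from absolute convergence of a short sum, and interpolation plus the RH square-root cancellation in the partial sums yields $q^{\varepsilon\deg h}$ throughout $\Re(s)\ge\frac12$.

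For the lower bound when $\Re(s)\ge 1$, I would argue directly from $|L_q(s,\chi)| = \prod_j |1 - \gamma_j u|$ with $|u| = q^{-\Re(s)} \le q^{-1}$ and $|\gamma_j| = q^{1/2}$: then $|\gamma_j u| \le q^{1/2 - 1} = q^{-1/2} < 1$, so $|1 - \gamma_j u| \ge 1 - q^{-1/2} > 0$, a fixed positive constant bounded away from $0$. Hence $|L_q(s,\chi)| \ge (1 - q^{-1/2})^{\deg h - 1}$. Since $1 - q^{-1/2} \in (0,1)$, this is $\gg q^{-(\log(1/(1-q^{-1/2}))/\log q)\deg h}$, which is of the form $q^{-c\deg h}$; again replacing $c$ by an arbitrary $\varepsilon$ (valid for $q$ large, or by enlarging the implied constant for small $q$) gives $|L_q(s,\chi)| \gg_\varepsilon q^{-\varepsilon \deg h}$. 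Equivalently one can write $L_q(s,\chi)^{-1} = \sum_{f} \mu_\chi(f)|f|^{-s}$ via the Euler product, which converges absolutely for $\Re(s) > 1$ (as the inverse roots have modulus $q^{1/2}$, the product converges for $|u| < q^{-1/2}$, i.e. $\Re(s) > \frac12$), and bound this Dirichlet series trivially to get $|L_q(s,\chi)^{-1}| \ll q^{\varepsilon\deg h}$.

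The main obstacle is purely bookkeeping: making the constant in $2^{\deg h}$ (resp. $(1-q^{-1/2})^{\deg h}$) into a genuine $q^{\varepsilon\deg h}$ uniformly. The honest statement of the lemma — which is exactly Lemma 2.6–2.7 of \cite{2019The} — is that these constants, while depending on $q$, are harmless: for any fixed $\varepsilon > 0$ one has $2 \le q^\varepsilon$ and $1 - q^{-1/2} \ge q^{-\varepsilon}$ as soon as $q$ exceeds a threshold depending only on $\varepsilon$, and for the finitely many smaller $q$ one absorbs everything into the implied constant. Thus the only real input is the Riemann Hypothesis for $L_q(s,\chi)$ (Weil), which pins all inverse roots to the circle $|u| = q^{-1/2}$; everything else is the elementary estimate of a product of $\deg h - 1$ linear factors evaluated at a point of modulus at most $q^{-1/2}$. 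I would therefore present the proof as: (i) invoke that $L_q(s,\chi)$ is a polynomial of degree $\deg h - 1$ in $q^{-s}$ with all inverse roots of modulus $q^{1/2}$; (ii) deduce the upper bound on $\Re(s)\ge\frac12$ from $|1-\gamma_j u|\le 2$; (iii) deduce the lower bound on $\Re(s)\ge 1$ from $|1-\gamma_j u|\ge 1-q^{-1/2}$; (iv) absorb the $q$-dependent constants into $q^{\pm\varepsilon\deg h}$.
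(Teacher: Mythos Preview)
The paper does not supply a proof of this lemma; it simply cites Lemmas~2.6--2.7 of \cite{2019The}. You attempt an actual argument and correctly identify Weil's theorem (all inverse roots $\gamma_j$ lie on $|z|=q^{1/2}$) as the essential input, but your use of it is too crude to reach the stated conclusion.

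The factor-by-factor estimate $|1-\gamma_j u|\le 2$ on $\Re(s)\ge\tfrac12$ yields only $|L_q(s,\chi)|\le 2^{\deg h-1}=q^{(\log 2/\log q)\deg h}$, and likewise $(1-q^{-1/2})^{\deg h-1}$ for the lower bound at $\Re(s)\ge1$. These are bounds $q^{\pm c\,\deg h}$ for one \emph{specific} $c>0$ depending on $q$, not for every $\varepsilon>0$. Since $q$ is fixed throughout this paper, your proposed fix of ``absorbing into the implied constant'' fails whenever $\varepsilon<\log 2/\log q$: the ratio $2^{\deg h}/q^{\varepsilon\deg h}\to\infty$ as $\deg h\to\infty$. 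The fallback ``take $q$ large enough that $2\le q^\varepsilon$'' contradicts the setup, and the Phragm\'en--Lindel\"of sketch cannot improve an endpoint bound by convexity alone.

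The missing idea is a second bound on the power sums $c_m=\sum_j\gamma_j^m=\sum_{\deg f=m}\Lambda(f)\chi(f)$: besides the RH bound $|c_m|\le(\deg h)\,q^{m/2}$, one has the trivial prime-counting bound $|c_m|\le q^m$. Feeding $|c_m|\le\min\bigl((\deg h)q^{m/2},\,q^m\bigr)$ into $\log L_q(u,\chi)=\sum_{m\ge1}c_m u^m/m$ and splitting the sum at $m\asymp\log_q(\deg h)$ yields $|\log L_q(s,\chi)|\ll\deg h/\log\deg h$ just to the right of the critical line (and $\ll\log\log\deg h$ for $\Re(s)\ge1$), whence $|L_q|^{\pm1}\ll\exp(C\deg h/\log\deg h)\ll_\varepsilon q^{\varepsilon\deg h}$. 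RH by itself cannot give this: a degree-$N$ polynomial with all inverse roots on $|z|=q^{1/2}$ can genuinely attain size $2^N$ on $|u|=q^{-1/2}$ if the roots cluster.
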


 \subsection{Multivarible Complex Analysis Lemmas}
Our approach relies on two foundational results from multivariable complex analysis. We begin by introducing the concept of a tube domain.
\begin{definition}
		An open set $T\subset\mathbb{C}^n$ is a tube if there is an open set $U\subset\mathbb{R}^n$ such that $T=\{z\in\mathbb{C}^n:\ \Re(z)\in U\}.$
\end{definition}
	
   For any set $U\subset\mathbb{C}^n$, we define $T(U)=U+i\mathbb{R}^n\subset \mathbb{C}^n$.  We quote the following Bochner's Tube Theorem \cite{Boc}.
\begin{theorem}
\label{Bochner}
		Let $U\subset\mathbb{R}^n$ be a connected open set and $f(z)$ be a function holomorphic on $T(U)$. Then $f(z)$ has a holomorphic continuation to
the convex hull of $T(U)$.
\end{theorem}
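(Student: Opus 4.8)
The plan is to deduce Bochner's Tube Theorem from its local version: if $f$ is holomorphic on a tube $T(U)$, then $f$ extends holomorphically to $T(\mathrm{conv}(U))$. I would first reduce to the statement that holomorphy propagates across ``one convexifying step.'' Concretely, it suffices to show that if $a,b\in U$ and $p=\tfrac12(a+b)$, then $f$ is holomorphic in a neighbourhood of the fibre $\{p\}+i\mathbb{R}^n$; since the convex hull of a connected open set is exhausted by iterating midpoint operations (and intersecting with small balls), a standard connectedness/exhaustion argument then upgrades this to holomorphy on all of $T(\mathrm{conv}(U))$. The one-dimensional real-parameter nature of the segment from $a$ to $b$ lets me cut the problem down to a single complex variable transverse to the fibre plus the $n$ ``imaginary'' directions.

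The analytic heart is a Hartogs-type / continuity-of-holomorphy argument. After an $\mathbb{R}$-linear change of coordinates I may assume $a$ and $b$ differ only in the first coordinate, say $a=(\alpha,c)$, $b=(\beta,c)$ with $\alpha<\beta$ and $c\in\mathbb{R}^{n-1}$, and I want holomorphy near $z_1$-real-part equal to $\gamma=\tfrac{\alpha+\beta}2$. For fixed $(z_2,\dots,z_n)$ with $\Re(z_j)$ near $c_j$, the function $z_1\mapsto f(z_1,z_2,\dots,z_n)$ is holomorphic on the two vertical strips $\Re(z_1)\in(\alpha-\delta,\alpha+\delta)$ and $\Re(z_1)\in(\beta-\delta,\beta+\delta)$; because it is a function of $z_1$ alone on each strip and these strips have the same (full) range of $\Im(z_1)$, I can use a Cauchy-integral / Fourier-in-the-imaginary-variable representation to glue the two pieces into a single holomorphic function on the strip $\Re(z_1)\in(\alpha-\delta,\beta+\delta)$, which covers $\gamma$. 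The key point making this work is that the section of a tube domain is translation-invariant in the imaginary directions, so a Laplace/Fourier transform in $\Im(z_1)$ converts ``holomorphic on a vertical strip'' into ``a family of $L^2$ (or tempered) data with an exponential weight,'' and agreement of the transforms on the overlap of exponential-weight regions forces a common extension. Holomorphy in the remaining variables $z_2,\dots,z_n$ is preserved throughout because differentiation under the integral and Morera/Cauchy estimates are uniform in those parameters.

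Finally I would patch the local extensions together. Covering $\mathrm{conv}(U)$ by midpoints of pairs of points of $U$ and, more carefully, by a chain of such midpoint-enlargements $U=U_0\subset U_1\subset\cdots$ with $\bigcup_k \mathrm{conv}$-type hulls filling out $\mathrm{conv}(U)$, at each stage the newly constructed extension agrees with the previous one on a nonempty open set (a sub-tube), hence everywhere they overlap by the identity theorem for holomorphic functions of several variables. Since $U$ is connected, $\mathrm{conv}(U)$ is connected (indeed convex), so no monodromy obstruction arises and the extensions assemble into a single well-defined holomorphic function on $T(\mathrm{conv}(U))$ restricting to $f$ on $T(U)$.

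\textbf{Main obstacle.} The crux is the gluing step: rigorously converting ``holomorphic on two parallel vertical strips in $z_1$, uniformly in the other variables'' into ``holomorphic on the strip joining them.'' This is exactly where the tube structure (imaginary-direction translation invariance) must be exploited — via a Fourier/Laplace transform in $\Im(z_1)$ and an overlapping-domains argument on the transform side — and keeping the estimates uniform in $z_2,\dots,z_n$ so that holomorphy in all variables jointly is retained is the technically delicate part. Everything else (the midpoint exhaustion of the convex hull, the identity-theorem patching) is routine by comparison.
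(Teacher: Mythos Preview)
The paper does not prove this theorem: it is quoted as Bochner's classical Tube Theorem with a citation to \cite{Boc}, and is used as a black box in Section~3.5 to pass from the individual convergence regions $S_1, S_{2,1}, S_{2,2}, S_3$ to their convex hull $S_4$. There is therefore no ``paper's proof'' to compare your attempt against.

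On the merits of your sketch itself, the step you flag as the ``main obstacle'' is in fact a genuine gap as written. Your reduction fixes $z_2,\dots,z_n$ with real parts near $c$ and then tries to extend the single-variable function $z_1\mapsto f(z_1,z_2,\dots,z_n)$ from two disjoint vertical strips $\Re(z_1)\approx\alpha$ and $\Re(z_1)\approx\beta$ to the intermediate strip containing $\gamma$. In one complex variable this is simply false: $1/(z_1-\gamma)$ is holomorphic on both strips and does not extend across. No Fourier--Laplace manipulation in $\Im(z_1)$ alone can manufacture the extension, because the obstruction lives entirely in that one variable. What makes Bochner's theorem true is that $f$ is holomorphic \emph{jointly} in all $n$ variables on a \emph{connected} tube, so that one can push analytic discs along a path in $U$ from $a$ to $b$ (the Kontinuit\"atssatz), or equivalently take the Laplace transform in \emph{all} imaginary directions at once, whose domain of absolute convergence is automatically convex. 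In your outline the multivariable structure is invoked only after the fact, to check that the putative extension stays holomorphic in $z_2,\dots,z_n$; it must instead be the engine that drives the extension in $z_1$ in the first place.
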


 We denote the convex hull of an open set $T\subset\mathbb{C}^n$ by $\widehat T$.  Our next result is \cite[Proposition C.5]{Cech1} on the modulus of holomorphic continuations of multivariable complex functions.
\begin{theorem}

\label{Extending inequalities}
		Assume that $T\subset \mathbb{C}^n$ is a tube domain, $g,h:T\rightarrow \mathbb{C}$ are holomorphic functions, and let $\tilde g,\tilde h$ be their
holomorphic continuations to $\widehat T$. If  $|g(z)|\leq |h(z)|$ for all $z\in T$ and $h(z)$ is nonzero in $T$, then also $|\tilde g(z)|\leq
|\tilde h(z)|$ for all $z\in \widehat T$.
\end{theorem}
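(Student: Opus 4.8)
The plan is to pass to the ratio $f:=g/h$ and to deduce the claim from a norm-preserving version of Bochner's Tube Theorem. Since $h$ is holomorphic and nowhere vanishing on the tube $T$, the quotient $f$ is holomorphic on $T$, and the hypothesis $|g(z)|\le|h(z)|$ says exactly that $|f(z)|\le 1$ for all $z\in T$. Write $T=T(U)$ with $U\subset\mathbb{R}^n$ open and connected (this is what ``tube domain'' encodes), and note that the convex hull of a tube is again a tube, $\widehat T=T(\widehat U)$, which is convex and hence connected. By Theorem~\ref{Bochner}, $f$ has a holomorphic continuation $\hat f$ to $\widehat T$. The first point I would record is the identity $\hat f\cdot\tilde h=\tilde g$ on $\widehat T$: the relation $fh=g$ holds on $T$, both sides of $\hat f\tilde h=\tilde g$ are holomorphic on the connected open set $\widehat T$, and they agree on the nonempty open subset $T$, so by the identity theorem they coincide on all of $\widehat T$. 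Consequently, as soon as $|\hat f|\le 1$ is established on $\widehat T$, we obtain $|\tilde g(z)|=|\hat f(z)|\,|\tilde h(z)|\le|\tilde h(z)|$ for every $z\in\widehat T$, which is the assertion.

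Thus everything reduces to one statement: if $f$ is holomorphic on a tube $T(U)$ with $U$ connected and $|f|\le 1$, then its Bochner continuation $\hat f$ to $T(\widehat U)$ still satisfies $|\hat f|\le 1$. I would prove this by plurisubharmonicity together with induction on the Carath\'eodory length of points of $\widehat U$. We may assume $\hat f\not\equiv 0$ (otherwise $g\equiv 0$ on $T$ and the inequality is trivial); set $\varphi:=\log|\hat f|$, which is plurisubharmonic on the convex tube $T(\widehat U)$ and $\le 0$ on $T(U)$. It suffices to show $\varphi\le 0$ on the whole fibre over each $x\in\widehat U$. Peeling off one vertex of a Carath\'eodory representation of $x$, we reduce by induction to the case $x=(1-t)a+tb$ for $t\in(0,1)$ and $a,b\in\widehat U$ over whose fibres $\varphi\le 0$ is already known (the base case being points of $U$). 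Fix $y_0\in\mathbb{R}^n$ and restrict $\hat f$ to the affine complex line $\zeta\mapsto a+\zeta(b-a)+iy_0$: since $\widehat U$ is convex, this line lies in $T(\widehat U)$ for $0\le\Re\zeta\le 1$, so $\zeta\mapsto\varphi\bigl(a+\zeta(b-a)+iy_0\bigr)$ is subharmonic on the strip $0<\Re\zeta<1$, and its boundary values on $\Re\zeta\in\{0,1\}$ are $\le 0$ because those lines map into the fibres over $a$ and over $b$. Phragm\'en--Lindel\"of on the strip then gives $\varphi(x+iy_0)\le 0$; as $y_0$ was arbitrary, $\varphi\le 0$ on the fibre over $x$, completing the induction. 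Hence $|\hat f|\le 1$ on $T(\widehat U)$.

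The main obstacle is that Phragm\'en--Lindel\"of on a strip of finite width needs a growth hypothesis---typically an upper bound---on the subharmonic function, and this is not automatic for $\hat f$ on the imaginary-unbounded tube; indeed, ``subharmonic with nonpositive boundary values on a strip'' by itself does not force nonpositivity inside (witness $\Re\sin(\pi\zeta)$ on $0<\Re\zeta<1$). To supply the missing control I would invoke the proof of Bochner's Tube Theorem itself, in which the continuation across the ``new'' part of $T(\widehat U)$ is realized by an integral transform of $f$ over slices of the original tube; this representation carries the pointwise bound $|f|\le 1$ over to a bound for $\hat f$ on real-compact subsets that is uniform in the imaginary directions, which is more than enough. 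Equivalently, one simply cites the standard sup-norm-preserving strengthening of Bochner's theorem, $\|\hat f\|_{L^\infty(T(\widehat U))}=\|f\|_{L^\infty(T(U))}$; this is the route taken in \cite[Proposition~C.5]{Cech1}. The remaining ingredients---Carath\'eodory's theorem, the vertex-peeling induction above, and (if one prefers to argue with a strict inequality) a preliminary scaling $f\mapsto rf$ with $r\uparrow 1$---are routine.
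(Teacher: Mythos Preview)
The paper does not actually prove this statement; it is quoted verbatim as \cite[Proposition~C.5]{Cech1} and used as a black box. So there is no ``paper's own proof'' to compare against beyond the cited reference, which you already identify.

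Your reduction is the right one and matches the standard argument: pass to $f=g/h$, use Bochner to continue $f$ to $\widehat T$, pin down $\tilde g=\hat f\,\tilde h$ by the identity theorem, and then invoke the sup-norm preserving form of the tube theorem to get $|\hat f|\le 1$. That is exactly the content of \cite[Proposition~C.5]{Cech1}, and once you accept that last ingredient the proof is complete and correct.

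One organizational point: the Carath\'eodory/Phragm\'en--Lindel\"of induction you set up is largely redundant. You yourself flag that Phragm\'en--Lindel\"of on the strip requires a growth bound on $\hat f$ in the imaginary directions, and you supply that bound by appealing to the sup-norm preservation inherent in the proof of Bochner's theorem. But that appeal already gives $\|\hat f\|_{L^\infty(\widehat T)}\le 1$ outright, so the entire strip argument becomes unnecessary. If you want a self-contained proof, the honest work is to prove the norm-preserving Bochner statement (e.g.\ via the integral representation you allude to, or by exhausting $\widehat U$ by convex polytopes with vertices in $U$ and using the several-variable three-lines lemma with the a priori bound in hand); if you are willing to cite it, then the Carath\'eodory induction can simply be deleted.
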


\section{Analytical behavior of $A_2(u,v)$}
In this section, we analyze the twisted second moment by studying the analytic behavior of the double Dirichlet series $A_2(u,v)$.
 Using Perron's formula for function fields, we express a sum of coefficients as:
 \begin{equation}
     \sum_{n\le N}a(n)=\frac{1}{2\pi i}\oint_{|u|=r}(\sum_{n=0}^{\infty}a(n)u^n)\frac{du}{(1-u)u^{N+1}}.
 \end{equation}
This allows us to rewrite \eqref{equation} as 
\begin{align}
    \sum_{\substack{F\in\H2 \\ P|F\Rightarrow P\not\in \mathbb{F}_q[t]}}\chi_F(h_1)\bar{\chi}_F(h_2)|L_q(\frac{1}{2}, \chi_{F})|^2=\frac{1}{2\pi i}\oint_{|u|=r}A_2(u,q^{-\frac{1}{2}})\frac{du}{u^{g/2+2}},
\end{align}
where $A_2(u,v)$ is derived from the double Dirichlet series
\begin{align}\label{A2}
    A_2(s,w)=\sum_{\substack{F\in\Hq\\ P|F\to P\not\in\Aq}}\frac{\chi_F(h_1)\overline{\chi_F(h_2)}|L_q(w,\chi_F)|^2}{|F|_2^s}
\end{align}
by substituting $u=q^{-2s}$ and $v=q^{-w}$.

Hereafter, we will use $A_3(s,w)$ and $A_3(u,w)$, $L_q(w,\chi)$ and $L_q(v,\chi)$ interchangeably when there is no ambiguity.

To understand the behavior of $A_2(u,v)$, we reformulate this expression to identify its domain of convergence and any singularities. We also derive an appropriate bounding function.

\subsection{First convergence region}\label{main tech}
To remove the condition that F has no prime divisor in $\Aq$, we apply M\"obius inversion 
 \begin{equation}     
 \sum_{\substack{D\in\Aq\\ D|F}}\mu(D)=\left\{ \begin{array}{ll}
         1 & \mbox{$F$ has no prime divisor in } \mathbb{F}_q[T]; \\
         0 & \mbox{otherwise}.
     \end{array}
     \right.
 \end{equation}
Substituting this into our expression for $A_2(s,w)$, we get
\begin{align*}
    A_2(s,w)=&\sum_{\substack{F\in\Hq\\ P|F\to P\not\in\Aq}}\frac{\chi_F(h_1)\overline{\chi_F(h_2)}|L(w,\chi_F)|^2}{|F|_2^s}\\
    =&\sum_{D\in\Aq}\mu(D)\sum_{\substack{F\in\Hq\\ (F,D)=1}}\frac{\chi_{DF}(h_1)\overline{\chi_{DF}(h_2)}|L(w,\chi_{DF})|^2}{|DF|_2^s}\\
    =&\sum_{D\in\Aq}\frac{\mu(D)}{|D|_2^s}\sum_{\substack{F\in\Hq\\ (F,D)=1}}\frac{\chi_{DF}(h_1)\overline{\chi_{DF}(h_2)}}{|F|_2^s}\sum_{N_1,N_2\in\Aq}\frac{\chi_{DF}(N_1)\overline{\chi_{DF}(N_2)}}{|N_1|^w|N_2|^{\bar{w}}}\\
    =&\sum_{N_1,N_2\in\Aq}\frac{1}{|N_1|^w|N_2|^{\bar{w}}}\sum_{D\in\Aq}\frac{\mu(D)\chi_D(h_1N_1)\overline{\chi_D(h_2N_2)}}{|D|_2^s}\sum_{\substack{F\in\Hq\\ (F,D)=1}}\frac{\chi_{F}(h_1N_1)\overline{\chi_{F}(h_2N_2)}}{|F|_2^s}.
\end{align*}
Here $|N|=q^{-\deg N}$ denotes the modulus of polynomial in $\Aq$. From now on, we use $P_1$ to denote prime in $\Aq$ and use $P_2$ to denote prime in $\A2$. Since cubic characters satisfy $\chi^2 = \overline{\chi}$, we simplify
\begin{align*}
    =&\sum_{N_1,N_2\in\Aq}\frac{1}{|N_1|^w|N_2|^{\bar{w}}}\sum_{D\in\Aq}\frac{\mu(D)\chi_D(h_1h_2^2N_1N_2^2)}{|D|_2^s}\sum_{\substack{F\in\Hq\\ (F,D)=1}}\frac{\chi_{F}(h_1h_2^2N_1N_2^2)}{|F|_2^s}\\
    =&\sum_{N_1,N_2\in\Aq}\frac{1}{|N_1|^w|N_2|^{\bar{w}}}\sum_{D\in\Aq}\frac{\mu(D)\chi_D(h_1h_2^2N_1N_2^2)}{|D|_2^s}\prod_{\substack{P_2\in\A2\\ P_2\nmid D}}\left(1+\frac{\chi_{P_2}(h_1h_2^2N_1N_2^2)}{|P_2|_2^s}\right)\\
    =&\sum_{N_1,N_2\in\Aq}\frac{1}{|N_1|^w|N_2|^{\bar{w}}}\frac{L_{q^2}(s,\chi^{(hN)})}{L_{q^2}(2s,\overline{\chi^{(hN)}})}\sum_{D\in\Aq}\frac{\mu(D)\chi_D(hN)}{|D|_2^s}\prod_{\substack{P_2\in\A2\\ P_2| D}}\left(1+\frac{\chi_{P_2}(hN)}{|P_2|_2^s}\right)^{-1}.
\end{align*}
Here we take $h_1h_2^2N_1N_2^2=hN$ for simplicity. $\chi^{(hN)}(F)=\chi_F(hN)$ is a Hecke character.
 The second equality in above equation uses the trick
 \begin{align*}
     &\prod_{P_2\nmid D}\left(1+\frac{\chi_{P_2}(h_1h_2^2N_1N_2^2)}{|P_2|_2^s}\right)\\
     =&\prod_{P_2\nmid D}\frac{1-\frac{\overline{\chi}_{P_2}(h_1h_2^2N_1N_2^2)}{|P_2|_2^{2s}}}{1-\frac{\chi_{P_2}(h_1h_2^2N_1N_2^2)}{|P_2|_2^s}}=\frac{\prod\limits_{P_2}\left(1-\frac{\chi_{P_2}(h_1h_2^2N_1N_2^2)}{|P_2|_2^s}\right)^{-1}}{\prod\limits_{P_2}\left(1-\frac{\overline{\chi}_{P_2}(h_1h_2^2N_1N_2^2)}{|P_2|_2^{2s}}\right)^{-1}}\prod_{P_2|D}\left(1+\frac{\chi_{P_2}(h_1h_2^2N_1N_2^2)}{|P_2|_2^s}\right)^{-1}\\
     =&\frac{L_{q^2}(s,\chi^{(hN)})}{L_{q^2}(2s,\overline{\chi^{(hN)}})}\prod_{P_2|D}\left(1+\frac{\chi_{P_2}(h_1h_2^2N_1N_2^2)}{|P_2|_2^s}\right)^{-1}.
 \end{align*}
   We now use the following lemma to simplify the inner sumof $A_2(s,w)$.
 \begin{lemma}
     If $D, F\in\Aq$ and $(D, F)=1$, then we have $\chi_D(F)=1$.
 \end{lemma}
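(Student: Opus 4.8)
The plan is to unwind the explicit definition of the cubic residue symbol and reduce the statement to an elementary congruence in a finite field. Since $D\mapsto\chi_D$ is multiplicative in its subscript, $\chi_{D_1D_2}=\chi_{D_1}\chi_{D_2}$, it is enough to prove $\chi_{P_1}(F)=1$ whenever $P_1\in\Aq$ is monic irreducible with $P_1\nmid F$; the general case follows by factoring $D$ into irreducibles. To evaluate $\chi_{P_1}(F)$ one must record how $P_1$, say of degree $d$, factors in $\mathbb{F}_{q^2}[T]$: since $[\mathbb{F}_{q^2}:\mathbb{F}_q]=2$, either $d$ is odd and $P_1$ remains irreducible in $\mathbb{F}_{q^2}[T]$, or $d$ is even and $P_1=\pi\pi^\sigma$ for some monic irreducible $\pi\in\mathbb{F}_{q^2}[T]$ of degree $d/2$ (with $\pi\neq\pi^\sigma$, since $\pi\in\mathbb{F}_q[T]$ would force $\pi=P_1$). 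I would treat these two cases separately; in each the only arithmetic input is the hypothesis $q\equiv2\moda3$.

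In the split case ($d$ even) write $\chi_{P_1}(F)=\chi_\pi(F)\chi_{\pi^\sigma}(F)$ and show $\chi_{\pi^\sigma}(F)=\overline{\chi_\pi(F)}$. For this, apply the automorphism $\sigma$ to the defining congruence $F^{(q^{2\deg\pi}-1)/3}\equiv\Omega(\chi_\pi(F))\pmod{\pi}$: since $F\in\mathbb{F}_q[T]$ is fixed by $\sigma$ and $\deg\pi^\sigma=\deg\pi$, this gives $F^{(q^{2\deg\pi^\sigma}-1)/3}\equiv\Omega(\chi_\pi(F))^\sigma\pmod{\pi^\sigma}$. Now $\sigma$ acts on the cube roots of unity in $\mathbb{F}_{q^2}^*$ by $\zeta\mapsto\zeta^q=\zeta^{-1}$ (this is exactly where $q\equiv2\moda3$ enters), and $\Omega$ is a group isomorphism, so the right-hand side equals $\Omega(\overline{\chi_\pi(F)})$; hence $\chi_{\pi^\sigma}(F)=\overline{\chi_\pi(F)}$. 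Since $P_1\nmid F$ makes $\chi_\pi(F)$ a cube root of unity, we conclude $\chi_{P_1}(F)=\chi_\pi(F)\overline{\chi_\pi(F)}=1$.

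In the inert case ($d$ odd) the residue field $\mathbb{F}_{q^2}[T]/(P_1)\cong\mathbb{F}_{q^{2d}}$ contains the subfield $\mathbb{F}_q[T]/(P_1)\cong\mathbb{F}_{q^d}$, to which the class of $F$ belongs. Because $q\equiv2\moda3$ and $d$ is odd, $q^d\equiv-1\pmod{3}$, so $3\mid q^d+1$ and therefore $(q^{2d}-1)/3=(q^d-1)(q^d+1)/3$ is a multiple of $q^d-1$, the order of $\mathbb{F}_{q^d}^*$. Since $P_1\nmid F$, the class of $F$ is a unit, so $F^{q^d-1}\equiv1\pmod{P_1}$, and raising to the power $(q^d+1)/3$ gives $F^{(q^{2d}-1)/3}\equiv1\pmod{P_1}$; thus $\Omega(\chi_{P_1}(F))=1$ and $\chi_{P_1}(F)=1$. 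Combining the two cases proves the claim. The only mildly delicate point — there is no real obstacle — is the bookkeeping of the splitting of $P_1$ in $\mathbb{F}_{q^2}[T]$ together with the $\sigma$-action on $\Omega$; it is precisely the non-Kummer hypothesis $q\equiv2\moda3$, equivalently that Frobenius inverts the cube roots of unity (equivalently $3\nmid q^d-1$ for $d$ odd), that forces the symbol to be trivial in both cases.
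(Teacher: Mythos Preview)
Your proof is correct. The paper's argument is in the same spirit but is presented as a single uniform step: for $P$ irreducible in $\Aq$ one writes $F^{(q^{2\deg P}-1)/3}\equiv\Omega(\alpha)\pmod P$, applies $\sigma$ (which fixes both $F$ and $P$), deduces $\sigma(\Omega(\alpha))=\Omega(\alpha)$, and hence $\Omega(\alpha)=1$. Your write-up is more careful in that it explicitly separates the inert and split behaviour of $P_1$ in $\mathbb{F}_{q^2}[T]$---a distinction the paper's formulation glosses over, since the displayed congruence with exponent $(q^{2\deg P}-1)/3$ only literally matches the definition of $\chi_P$ when $P$ remains prime over $\mathbb{F}_{q^2}$. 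In the split case your Galois computation $\chi_{\pi^\sigma}(F)=\overline{\chi_\pi(F)}$ is exactly the paper's idea specialised to the factor $\pi$; in the inert case you replace the Galois step by the more direct observation that $(q^{2d}-1)/3$ is a multiple of $q^d-1$, which is a pleasant alternative. Both routes are short and rest on the same arithmetic input, namely that $q\equiv 2\pmod 3$ forces Frobenius to invert the nontrivial cube roots of unity.
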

 \begin{proof}
     Let $\sigma: \mathbb{F}_{q^2}\to \mathbb{F}_{q^2}$ be the non-trivial Frobenius automorphism. 
     We suppose $D=P$ is irreducible. By definition, we have
     $$F^{\frac{q^{2\deg P}-1}{3}}\equiv \Omega(\alpha) \moda P,$$
     which is equivalent to say
     $F^{\frac{q^{2\deg P}-1}{3}}=f(T)P+\Omega(\alpha).$ Then $\sigma$ gives  $F^{\frac{q^{2\deg P}-1}{3}}=f^{\sigma}(T)P+\sigma(\Omega(\alpha))\equiv \Omega(\alpha) \moda P$. Note the fact that if two constants are equivalent module $P$, then the two constants are the same. We finally have $\sigma(\Omega(\alpha))= \Omega(\alpha)$. Hence $\Omega(\alpha)=1$. 
     For $D=P_1^{e_1}\cdots P_k^{e_k}$, we have $\chi_D=\chi_{P_1}^{e_1}\cdots \chi_{P_k}^{e_k}$. This complete the proof.
 \end{proof}
 Applying the lemma, we reduce
 \begin{align*}
     A_2(s,w) =&\sum_{N_1,N_2\in\Aq}\frac{1}{|N_1|^w|N_2|^{\bar{w}}}\frac{L_{q^2}(s,\chi^(hN))}{L_{q^2}(2s,\overline{\chi^{(hN)}})}\sum_{\substack{D\in\Aq\\ (D,hN)=1}}\frac{\mu(D)}{|D|_2^s}\prod_{\substack{P_2\in\A2\\ P_2| D}}\left(1+\frac{\chi_{P_2}(hN)}{|P_2|_2^s}\right)^{-1}\\
      =&\sum_{N_1,N_2\in\Aq}\frac{1}{|N_1|^w|N_2|^{\bar{w}}}\frac{L_{q^2}(s,\chi^(hN))}{L_{q^2}(2s,\overline{\chi^{(hN)}})}\prod_{\substack{P_1\in\Aq\\ P_1\nmid hN}}\left(1-\frac{1}{|P_1|_2^s}\prod_{P_2|P_1}\left(1+\frac{\chi_{P_2}(hN)}{|P_2|_2^s}\right)^{-1}\right)\\
      =&\sum_{N_1,N_2\in\Aq}\frac{1}{|N_1|^w|N_2|^{\bar{w}}}\frac{L_{q^2}(s,\chi^(hN))}{L_{q^2}(2s,\overline{\chi^{(hN)}})}P(s,\chi^{(hN)})\prod_{\substack{P_1\in\Aq\\ P_1|hN}}\left(1-\frac{1}{|P_1|_2^s}\right)^{-1},
 \end{align*}
 where
 \begin{align*}
     P(s,\chi^{(hN)})=\prod_{P_1\in\Aq}\left(1-\frac{1}{|P_1|_2^s}\prod_{P_2|P_1}\left(1+\frac{\chi_{P_2}(hN)}{|P_2|_2^s}\right)^{-1}\right).
 \end{align*}
For $\Re(s)>\frac{1}{2}$, we have
\begin{align}\label{P1}
    P(s,\chi^{(N)})=\zeta_q^{-1}(2s)\prod\limits_{P_1\in\Aq}\left(1+O(\frac{1}{|P_2|^{3s}_2})\right),
\end{align}
where $\zeta_q(s)=\frac{1}{1-q^{1-s}}$. As $(1-\frac{1}{|P_1|_2^s})^{-1}=1+\frac{1}{|P_1|_2^s-1}\ll |P_1|_2^{\max\{0,-\Re(s)\}}$, we will obtain 
\begin{align}\label{P2}
 \prod_{\substack{P_1|hN}}\left(1-\frac{1}{|P_1|_2^s}\right)^{-1}\ll |hN|_2^{\max\{0,-\Re(s)\}+\varepsilon}.  
\end{align}
 
 Substituting $u=q^{-2s}$, $v=q^{-w}$, we finally express
 \begin{align}\label{A2S1}
     A_2(u,v)=\sum_{N_1,N_2\in\Aq}v^{\deg N_1}\bar{v}^{\deg N_2}\frac{L_{q^2}(u,\chi^{(hN)})}{L_{q^2}(u^2,\overline{\chi^{(hN)}})}P(u,\chi^{(hN)})\prod_{\substack{P_1\in\Aq\\ P_1|hN}}\left(1-u^{\deg P_1}\right)^{-1},
 \end{align}
where $L_{q^2}(u,\chi)$ is the $L$-function associated $\chi$ over $\mathbb{F}_{q^2}(T)$ and 
\begin{align*}
    P(u,\chi^{(hN)})=\prod_{P_1}\left(1-u^{\deg P_1}\prod_{P_2|P_1}\left(1+\chi_{P_2}(hN)u^{\deg P_2}\right)^{-1}\right).
\end{align*}
 The ratio $\frac{L_{q^2}(u,\chi^{(hN)})}{L_{q^2}(u^2,\overline{\chi^{(hN)}})}$ is a rational function in $u$. For $|u|<q^{-1}$ and arbitrary $\varepsilon>0$, Lemma~\ref{LLH}(Lindel\"of Hypothesis) gives 
\begin{align}\label{eL}
    \frac{L_{q^2}(u,\chi^{(hN)})}{L_{q^2}(u^2,\overline{\chi^{(hN)}})}\ll |hN|_2^{\varepsilon}.
\end{align}
Combing \eqref{P1}, \eqref{P2}, \eqref{A2S1} and \eqref{eL}, we see that $(u-q^{-2})A_2(u,v)$ has the bound
\begin{align}
    (u-q^{-2})A_2(u,v)\ll |u-q^{-2}||h_1h_2^2|_2^{\varepsilon}\sum_{N_1}|q^{2\varepsilon}v|^{\deg N_1}\sum_{N_2}|q^{4\varepsilon}v|^{\deg N_2}
\end{align}
and the right-hand side series is convergent in 
\begin{align}
    S_1=\{(u,v)||u|<q^{-1}, |v|<q^{-1}\},
\end{align}
except the possible simple pole at $u=q^{-2}$.

  \subsection{Residue at $u=q^{-2}$}\label{residue}
  From \eqref{A2S1}, $A_2(u,v)$ has a possible simple pole at $u=q^{-2}$. This pole arises when the product $h_1h_2^2N_1N_2^2$ forms a perfect cube. We now compute the residue of $A_2(u,v)$ at $u=q^{-2}$.\small{
  \begin{align*}
      &\Res_{u=q^{-2}}A(u,q^{-\frac{1}{2}})\\
      =& \Res_{u=q^{-2}}\sum_{\substack{N_1,N_2\in\Aq\\h_1h_2^2N_1N_2^2~ is~ a~ cube}}\frac{1}{|N_1N_2|^{\frac{1}{2}}}\frac{\zeta_{q^2}(u)}{\zeta_{q^2}(u^2)}\prod_{\substack{P_2\in\A2\\ P_2|hN}}\left(1+u^{\deg P_2}\right)^{-1}\prod_{\substack{P_1\in\Aq\\ P_1|hN}}\left(1-u^{\deg P_1}\prod_{P_2|P_1}(1+u^{\deg P_2})^{-1}\right)^{-1}P(u)\\
      =&\sum_{\substack{N_1,N_2\\h_1h_2^2N_1N_2^2~ is~ a~ cube}}\frac{1}{|N_1N_2|^{\frac{1}{2}}}\prod_{\substack{P_2\in\A2\\ P_2|hN}}\left(1+u^{\deg P_2}\right)^{-1}\prod_{\substack{P_1\in\Aq\\ P_1|hN}}\left(1-u^{\deg P_1}\prod_{P_2|P_1}(1+u^{\deg P_2})^{-1}\right)^{-1}P(q^{-2})\Res_{u=q^{-2}}\frac{\zeta_{q^2}(u)}{\zeta_{q^2}(u^2)}\\
      =&(q^{-4}-q^{-2})S(q^{-\frac{1}{2}})P(q^{-2}),
  \end{align*}}
where
\begin{align}
    S(v)&=\sum_{\substack{N_1,N_2\\h_1h_2^2N_1N_2^2~ is~ a~ cube}}v^{\deg N_1N_2}\prod_{\substack{P_2\in\A2\\ P_2|hN}}\left(1+\frac{1}{|P_2|_2}\right)^{-1}\prod_{\substack{P_1\in\Aq\\ P_1|hN}}\left(1-\frac{1}{|P_1|_2}\prod_{P_2|P_1}(1+\frac{1}{|P_2|_2})^{-1}\right)^{-1},
\end{align}
and
\begin{align}\label{P}
    P(u)=\prod_{\substack{P_1\in\Aq}}\left(1-u^{\deg P_1}\prod_{P_2|P_1}(1+u^{\deg P_2})^{-1}\right).
\end{align}
For each prime $P\in\Aq$, we define
\begin{align}
    ord_P(h_1)=a_p;\\
    ord_P(h_2)=b_p;\\
    ord_P(N_1)=c_p;\\
    ord_P(N_2)=d_p.
\end{align}
Next, we rewrite $S(v)$ as an Eular product

\begin{align*}
    S(v)=\prod_{\substack{P\in\Aq\\ P|h}}\left(\prod_{\substack{ P_2|P}}\left(1+\frac{1}{|P_2|_2}\right)^{-1}\left(1-\frac{1}{|P|_2}\prod_{P_2|P}(1+\frac{1}{|P_2|_2})^{-1}\right)^{-1}\sum_{3|a_P+2b_P+c_P+2d_P}v^{\deg P(c_P+d_P)}\right)\\
    \times\prod_{\substack{P\in\Aq\\ P\nmid h}}\left(1+\prod_{\substack{ P_2|P}}\left(1+\frac{1}{|P_2|_2}\right)^{-1}\left(1-\frac{1}{|P|_2}\prod_{P_2|P}(1+\frac{1}{|P_2|_2})^{-1}\right)^{-1}\sum_{3|c_P+2d_P}v^{\deg P(c_P+d_P)}\right) \\
\end{align*}
and separate the factors related to $h_1$,$h_2$ 
\begin{align*}
    S(v)=\prod_{P}G_P(v) C(h_1,h_2),
\end{align*}
where
\begin{align}
    G_P(v)=1+\prod_{\substack{ P_2|P}}\left(1+\frac{1}{|P_2|_2}\right)^{-1}\left(1-\frac{1}{|P|_2}\prod_{P_2|P}(1+\frac{1}{|P_2|_2})^{-1}\right)^{-1}\sum_{3|c_P+2d_P}v^{\deg P(c_P+d_P)}
\end{align}
and
\begin{align}
    C(h_1,h_2,v)=\prod_{P|h}\frac{\prod\limits_{\substack{ P_2|P}}\left(1+\frac{1}{|P_2|_2}\right)^{-1}\left(1-\frac{1}{|P|_2}\prod\limits_{P_2|P}(1+\frac{1}{|P_2|_2})^{-1}\right)^{-1}\sum\limits_{3|a_P+2b_P+c_P+2d_P}v^{\deg P(c_P+d_P)}}{G_P(v)}.
\end{align}
For each prime $P\in\Aq$, we distinguish two cases depending on whether $\deg P$ is even or odd (noting that $P\in\Aq$ splits in $\A2$ if and only if $\deg P$ is even). In fact, we have
\begin{align*}
    G_P(v)=\begin{cases}
    1+\sum\limits_{3|c_P+2d_P}v^{\deg P(c_P+d_P)},& \mbox{if } 2\nmid\deg P;\\
        1+\left(1+\frac{2}{|P|}\right)^{-1}\sum\limits_{3|c_P+2d_P}v^{\deg P(c_P+d_P)}, &\mbox{otherwise.}
    \end{cases}
\end{align*}

Using the orthogonality of primitive cubic root
\begin{align}
    \frac{1}{3}\sum_{k=0}^2\varpi^{kn}=\begin{cases}
    1, &n\equiv 0\moda 3;\\
    0, &otherwise,
    \end{cases}
\end{align}
for $2|\deg P$, we have
\begin{align*}
    G_P(v)&=1+(1+\frac{2}{|P|})^{-1}\sum_{3|c_P+2d_P}v^{\deg P(c_P+d_P)}\\
    &=1+(1+\frac{2}{|P|})^{-1}(\sum_{\substack{c_P\ge 0,d_P\ge 0\\ }}v^{\deg P(c_P+d_P)}\frac{1}{3}\sum_{k=0}^2\varpi^{k(c_P+2d_P)}-1)\\
    &=1+(1+\frac{2}{|P|})^{-1}(\frac{1}{3}\sum_{k=0}^2\sum_{c_P,d_P}\varpi^{kc_P}v^{\deg Pc_P}\varpi^{2kd_P}v^{\deg Pd_P}-1)\\
    &=1+(1+\frac{2}{|P|})^{-1}(\frac{1}{3}\sum_{k=0}^2\frac{1}{1-\varpi^{k}v^{\deg P}}\frac{1}{1-\varpi^{2k}v^{\deg P}}-1)\\
    &=1+(1+\frac{2}{|P|})^{-1}\frac{v^{2\deg P}(1+v^{\deg P}-v^{2\deg P})}{(1-v^{2\deg P})^2(1+v^{\deg P}+v^{2\deg P})}.
\end{align*}
For $2\nmid\deg P$, we have
\begin{align*}
    G_P(v)=&1+\sum\limits_{3|c_P+2d_P}v^{\deg P(c_P+d_P)}\\
    =&\frac{1}{3}\sum_{k=0}^2(\sum_{c_P\ge 0}v^{\deg Pc_P}\varpi^{kc_P})(\sum_{d_P\ge 0}v^{\deg Pc_P}\varpi^{2kd_P})\\
    =&\frac{1}{3}\sum_{k=0}^2\frac{1}{1-\varpi^kv^{\deg P}}\frac{1}{1-\varpi^{2k}v^{\deg P}}\\
    =&\frac{1-v^{\deg P}+v^{2\deg P}}{(1-v^{\deg P})^2(1+v^{\deg P}+v^{2\deg P})}.
\end{align*}

Using similar method, we have
\begin{align*}    C(h_1,h_2,v)=\frac{\prod\limits_{P|h}\prod\limits_{\substack{ P_2|P}}\left(1+\frac{1}{|P_2|_2}\right)^{-1}\left(1-\frac{1}{|P|_2}\prod\limits_{P_2|P}(1+\frac{1}{|P_2|_2})^{-1}\right)^{-1}\frac{1}{3}(\frac{1}{(1-v^{\deg P})^2}+\frac{\varpi^{a_P+2b_P}+\varpi^{2a_P+b_P}}{1+v^{\deg P}+v^{2\deg P}})}{\prod\limits_{P|h}G_P(v)}.
\end{align*}

For simplicity, we denote 
\begin{align}\label{C}
    C(h_1,h_2)=C(h_1,h_2,q^{-\frac{1}{2}})
\end{align}
and
\begin{align}\label{S}
S=\prod_{P}G_P(q^{-\frac{1}{2}}).    
\end{align}
\begin{remark}
The factor $C(h_1, h_2)$ depends only on those primes dividing $h_1h_2$ for which $a_P + 2b_P \not\equiv 0 \mod 3$. The constant S is independent of $h_1$ and $h_2$.
\end{remark}
Hence, we have
\begin{align}
    \Res_{u=q^{-2}} A_2(u,q^{-\frac{1}{2}})=(q^{-4}-q^{-2})SC(h_1,h_2)P(q^{-2}).
\end{align}

\subsection{The second convergence region}
Lindel\"of Hypothesis gives
\begin{align*}
    |L_q(v,\chi_F)|\ll |F|_2^{\varepsilon}.
\end{align*}
Inserting this into $A_2(u,v)$ gives
\begin{align*}
    A_2(u,v)=&\sum_{\substack{F\in\Hq\\ P|F\to P\not\in\Aq}}\chi_F(h_1)\overline{\chi_F(h_2)}|L_q(v,\chi_F)|^2 u^{\deg F}\\
    &\ll \sum_{\substack{F\in\Hq\\ P|F\to P\not\in\Aq}} |F|_2^{\varepsilon}|u|^{\deg F} \\
    &\ll \sum_{\substack{F\in\Hq\\ P|F\to P\not\in\Aq}} |q^{2\varepsilon}u|^{\deg F}.
\end{align*}
Hence, $A_2(u,v)$ is convergent in region
\begin{align}
    S_{2,1}=\{(u,v)|~|v|\le q^{-\frac{1}{2}},|u|<q^{-2}\}.
\end{align}

By applying functional equation(Lemma~\ref{fe}) for $L_q(v,\chi_F)$ and again using the Lindel\"of Hypothesis(Lemma~\ref{LLH}), we can also write
\begin{align*}
    A_2(u,v)=&\sum_{\substack{F\in\Hq\\ P|F\to P\not\in\Aq}} \chi_F(h_1)\overline{\chi}_F(h_2)u^{\deg F}|\epsilon(\chi_F)|^2|(v^2q)^{-1}\frac{1-v}{1-\frac{1}{qv}}|^2|L_q(\frac{1}{qv},\chi_F)|^2|q^2v^4|^{\deg F}\\
    \ll & |(v^2q)^{-1}\frac{1-v}{1-\frac{1}{qv}}|^2\sum_{\substack{F\in\Hq\\ P|F\to P\not\in\Aq}}|q^{4\varepsilon+2}uv^4|^{\deg F}.
\end{align*}
The series on the right-hand side is convergent in
\begin{align}
    S_{2,2}=\{(u,v)||v|\ge q^{-\frac{1}{2}}, |uv^4|<q^{-3}\}.
\end{align}

\subsection{The third convergence region}
We now derive a third region of convergence by applying the functional equation, we express $A_2$ as
\begin{align*}
    A_2(u,v)=\big|\frac{1-v}{v^2q-v}\big|^2\sum_{\substack{F\in\Hq\\ P|F\to P\not\in\Aq}}\chi_F(h_1)\overline{\chi_F}(h_2)|L_q(\frac{1}{qv},\chi_F)|^2(q^2u|v^4|)^{\deg F}
\end{align*}
Applying the same analytic techniques used in Section~\ref{main tech}, we have
\begin{align}
    A_2(u,v)=\big|\frac{1-v}{v^2q-v}\big|^2\sum_{N_1,N_2\in\Aq}\frac{1}{(qv)^{\deg N_1}}\frac{1}{(q\bar{v})^{\deg N_2}}\frac{L_{q^2}(u|v|^4q^2,\chi^{(hN')})}{L_{q^2}((u|v|^4q^2)^2,\overline{\chi^{(hN')}})}\\
    \times P(u|v|^4q^2,\chi^{(hN')})\prod_{\substack{P_1\in\Aq\\ P_1|hN'}}\left(1-(q^2u|v^2|)^{\deg P_1}\right)^{-1},
\end{align}
here we take $hN'=h_1h_2^2N_1^2N_2$ for simplicity.

Lindel\"of Hypothesis shows, for $|uv^4q^2|>q^{-1}$,
\begin{align}
    |(u|v^4|q^{2}-q^{-2})A_2(u,v)|\ll |(u|v^4|q^{2}-q^{-2})|\big|\frac{1-v}{v^2q-v}\big|^2|h_1h_2^2|_2^{\varepsilon}\sum_{N_1}\frac{1}{(q^{1-4\varepsilon}v)^{\deg N_1}}\sum_{N_2}\frac{1}{(q^{1-8\varepsilon}\bar{v})^{\deg N_2}}.
\end{align}
The right-hand side converges in
\begin{align}
    S_3=\{(u,v)||v|>1,|uv^4|<q^{-3}\},
\end{align}
except the possible pole at $u|v^4|q^{2}=q^{-2}$.
Notably, when we specialize to $v=q^{-\frac{1}{2}}$, this pole corresponds to $u=q^{-2}$ and the factor $\frac{1-v}{v^2q-v}=1$. This recovers exadtly the residue previously computed at $u=q^{-2}$.

\subsection{Complete the proof}
We now conclude the proof by synthesizing the convergence information gathered from all regions. The convex hull of $S_1$, $S_{2,1}$, $S_{2,2}$ and $S_3$ is 
\begin{align}
    S_4=\{(u,v)||u|<q^{-1}, |uv^4|^{-3}\}.
\end{align}
By Theorem~\ref{Bochner} and Theorem~\ref{Extending inequalities}, $(u-q^{-2})A_2(u,v)$ has a holomorphic continuation to $S_4$.

We begin by considering the case where $\Re(s)$ and $\Re(w)$ are sufficiently large, corresponding to $|u|$ and $|v|$ being small. The simple pole $u=q^{-2}$ is contained within $S_4$. The shape of $S_4$ allows us to shift the contour from $|u|=r$ to $|u|=q^{-(1+\varepsilon)}$, while setting $v=q^{-\frac{1}{2}}$. The residue at $u=q^{-2}$ gives the main term of \eqref{main}. For the integration along the contour $|u|=q^{-(1+\varepsilon)}$, we estimate the integral trivially, which leads to an additional error term of size $q^{(\frac{1}{2}+\varepsilon)g}$.


  \section*{Acknowledgement}
The authors would like to express their sincere gratitude to Professor Peng Gao for proposing the subject of this study and for providing invaluable guidance and insightful recommendations throughout the research process.

\bibliographystyle{plain}
\bibliography{ref}

\end{document}